\newtheorem{theorem}{Theorem}
\newtheorem{defi}{Definition}
\newtheorem{corollary}[theorem]{Corollary}
\newtheorem{proposition}[theorem]{Proposition}
\newtheorem{remark}{Remark}
\begin{document}

\title{Decomposition of a graph into two disjoint odd subgraphs}

\author[1]{Mikio Kano\thanks{mikio.kano.math@vc.ibaraki.ac.jp}}
\author[2,3]{Gyula Y Katona\thanks{kiskat@cs.bme.hu}}
\author[2]{Kitti Varga\thanks{vkitti@cs.bme.hu}}
\affil[1]{Ibaraki University, Hitachi, Ibaraki, Japan}
\affil[2]{Department of Computer Science and
Information Theory, Budapest University of Technology and Economics, Hungary}
\affil[3]{MTA-ELTE Numerical Analysis and Large Networks Research Group, Hungary}
\date{\today}

\maketitle

\begin{abstract}
 An odd (resp.~even) subgraph in a multigraph is its subgraph in which every vertex has odd (resp.~even) degree. We say that a multigraph can be decomposed into two odd subgraphs if its edge set can be partitioned into two sets so that both form odd subgraphs. In this paper we give a necessary and sufficient condition for the decomposability of a multigraph into two odd subgraphs. We also present a polynomial time algorithm for finding such a decomposition or showing its non-existence. We also deal with the case of the decomposability into an even subgraph and an odd subgraph.
\end{abstract}

\section{Introduction}
In this paper we mainly consider {\em multigraphs}, which may have multiple edges but have no loops.
A graph without multiple edges or loops is called a {\em simple graph}.
Let $G$ be a  multigraph with vertex set $V(G)$ and edge set $E(G)$. The number of vertices in $G$ is called its {\em order} and denoted by $|G|$, and the number of edges in $G$ is called its {\em size}
and denoted by $e(G)$. Let $EvenV(G)$ denote the set of vertices of even degree and $OddV(G)$ denote the set of vertices of odd degree.
For a vertex set $U$ of $G$, the subgraph of $G$ induced by $U$ is denoted by $\langle U \rangle _G$.  For two disjoint vertex sets $U_1$ and $U_2$ of $G$, the number of edges between $U_1$ and $U_2$ is denoted by $e_G(U_1, U_2)$.
For a vertex $v$ of $G$, the degree of $v$ in $G$ is denoted by $\deg_G(v)$.
Moreover, when some edges of $G$ are colored with red and blue, 
for a vertex $v$, the number of red edges incident with $v$
is denoted by $\deg_{\textrm{red}}(v)$, and the number of red edges in $G$ is denoted by $e_{\textrm{red}}(G)$. Analogously, $\deg_{\textrm{blue}}(v)$ and $e_{\textrm{blue}}(G)$ are defined.

An odd (resp.~even) subgraph of $G$ is a subgraph in which every vertex has odd (resp.~even) degree. An odd factor of $G$ is a spanning odd subgraph of $G$. It is obvious by the handshaking lemma that every connected multigraph containing an odd factor has even order. This condition is also sufficient as follows.

\begin{proposition}[Problem $42$ of \S 7 in \cite{lovasz}] \label{oddfactor}
 A multigraph $G$ has an odd factor if and only if every component of $G$ has even order.
\end{proposition}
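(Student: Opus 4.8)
The plan is to treat the two implications separately: necessity is immediate from the handshaking lemma, while sufficiency is obtained by an explicit construction using paths and symmetric differences of their edge sets.

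For necessity, suppose $G$ has an odd factor $F$, and let $C$ be a component of $G$. Every edge of $F$ incident with a vertex of $C$ lies inside $C$, so the edges of $F$ induce on $V(C)$ a multigraph in which every vertex has odd degree; since the number of odd-degree vertices of any multigraph is even, $|C|$ is even.

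For sufficiency, assume every component of $G$ has even order; in particular no component has order $1$, so $G$ has no isolated vertices. Within each component pick a partition of its vertex set into unordered pairs. Since the two vertices of a pair lie in the same component, we may join them by a path in $G$. List the resulting paths as $P_1,\dots,P_k$, where $2k=|G|$, and set $F:=P_1\,\triangle\,\cdots\,\triangle\,P_k$, the symmetric difference of their edge sets, regarded as a spanning subgraph of $G$. For every vertex $v$ we have $\deg_F(v)\equiv\sum_{i=1}^{k}\deg_{P_i}(v)\pmod 2$, because passing to a symmetric difference adds degrees modulo $2$, and $\deg_{P_i}(v)$ is odd precisely when $v$ is an endpoint of $P_i$. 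Since each vertex of $G$ is an endpoint of exactly one of the paths $P_i$, every vertex has odd degree in $F$, so $F$ is an odd factor.

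There is essentially no hard step here; the only points needing a little care are the verification that symmetric difference is additive on degrees modulo $2$ and the remark that an all-odd-degree subgraph has no isolated vertices, so $F$ genuinely spans $V(G)$. (Alternatively, sufficiency can be proved by taking a spanning tree of each component, rooting it, and selecting tree edges greedily from the leaves toward the root so that every non-root vertex receives odd degree; the root then has odd degree automatically because its component has even order.)
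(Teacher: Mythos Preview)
Your proof is correct and matches the paper's own argument almost verbatim: the paper also pairs the vertices of each even component, fixes a path between each pair, and takes the edges appearing an odd number of times (i.e., the symmetric difference) as the odd factor. Your parenthetical spanning-tree alternative is close in spirit to the paper's second remark, which observes that in a spanning tree $T$ of even order the set $\{e\in E(T): T-e\text{ has two odd components}\}$ is an odd factor.
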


Moreover, such an odd factor, if it exists, can be found in polynomial time (Problem $42$ of \S 7 in \cite{lovasz}). Consider a connected multigraph of even order on the vertices $v_1, \ldots, v_{2m}$ and for any $i \in \{1,2, \ldots,m\}$, fix a path $P_i$ connecting $v_i$ and $v_{i+m}$. Then the edges appearing odd times in the paths $P_1, \ldots, P_m$ forms an odd factor. The above proposition also follows from the fact that for a tree $T$ of even order, the set
\[ \{e\in E(T): \mbox{$T-e$ consists of two odd components}\} \]
forms an odd factor of $T$.

We say that $G$ can be decomposed into $n$ odd subgraphs if its edge set can be partitioned into $n$ sets $E_1, \ldots, E_n$ so that for every $i \in \{ 1, \ldots, n \}$, $E_i$ forms an odd subgraph. Some authors say that in this case $G$ is odd $n$-edge-colorable.

Our main result gives a criterion for a  multigraph to be decomposed into two odd subgraphs, and proposes a polynomial time algorithm for finding such a decomposition or showing its
non-existence.

We begin with some known results related to ours.
 
\begin{theorem}[\cite{pyber}] \label{pyber-1}
Every simple graph can be decomposed into four odd subgraphs.
\end{theorem}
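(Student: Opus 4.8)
The plan is to reduce to connected graphs and then assemble the four odd subgraphs from a spanning tree together with a rebalanced version of the remaining edges. For the reduction: if every component of $G$ decomposes into at most four odd subgraphs, then numbering the parts $1,\dots,4$ in each component and taking, for each $i$, the union of the $i$-th parts over all components yields an odd subgraph of $G$ (a vertex-disjoint union of odd subgraphs is odd), so $G$ itself decomposes into four odd subgraphs. Hence we may assume $G$ is connected, and we fix a spanning tree $T$.

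Next I would prove the lemma that every forest decomposes into two odd subgraphs, by induction on the number of edges; since a forest is a vertex-disjoint union of trees it suffices to treat a tree $T$. If $T$ is a star (in particular if $|E(T)|\le 1$) this is immediate. Otherwise, rooting $T$ somewhere and taking a deepest internal vertex $p$, all neighbours of $p$ but one, say $q$, are leaves $\ell_1,\dots,\ell_k$. Delete $\ell_1,\dots,\ell_k$; by induction the smaller tree splits into odd subgraphs $H_1,H_2$, and since $p$ has become a leaf its unique edge $pq$ lies in exactly one of them, say $H_1$, so $\deg_{H_1}(p)=1$ and $p\notin V(H_2)$. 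Now put the edges $p\ell_1,\dots,p\ell_k$ into $H_2$ if $k$ is odd and into $H_1$ if $k$ is even: in either case the degree of $p$ in the altered class becomes odd while every $\ell_i$ gets degree one, so both classes stay odd. Applying this to $T$ produces two of our four odd subgraphs.

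The obvious choice of ``rest'', $R=G-E(T)$, need not split into two odd subgraphs — for instance $R$ might have a component isomorphic to $K_5$ — so I would rebalance first. Since $|OddV(G)|$ is even, fix a pairing of $OddV(G)$ and let $S\subseteq E(T)$ be the symmetric difference of the $T$-paths joining the paired vertices; then $S$ is a subforest of $T$ with $OddV(S)=OddV(G)$. Using instead the partition of $E(G)$ into $E(S)$ and $E(G)\setminus E(S)$, the graph $G':=G-E(S)$ has $\deg_{G'}(v)=\deg_G(v)-\deg_S(v)$ even at every vertex $v$ (both terms are odd if $v\in OddV(G)$, both even otherwise), so $G'$ is an even graph. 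The forest $S$ splits into two odd subgraphs by the lemma, and each even-order component $C$ of $G'$ splits into two odd subgraphs: by Proposition~\ref{oddfactor} it has an odd factor $F$, and then $C-E(F)$ has all degrees odd, hence is itself an odd subgraph.

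The real difficulty is with the odd-order components of $G'$, such as the $K_5$ above: a connected even graph of odd order is never a union of two odd subgraphs (both parts would have to be odd factors, contradicting the handshaking lemma), so each such component costs a third colour class (e.g.\ delete one of its vertices $v$, split the star at $v$ into two odd subgraphs, and treat the remainder). One then has to amalgamate this third class with one of the two classes already used on $S$ — or, alternatively, choose the pairing of $OddV(G)$ and the tree $T$ so that $G'$ has no odd-order component in the first place. The amalgamation should be arrangeable, because the extra class can be taken to be an odd subgraph supported on the odd component with its attachment point to the rest of $G$ removed, hence disjoint there from $V(S)$; but one must check that doing this for all odd components simultaneously never leaves a vertex incident with a positive even number of edges of a single colour. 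Carrying out this last step, which is what keeps the total at four rather than five, is the crux of the argument.
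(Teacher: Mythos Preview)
The paper does not prove Theorem~\ref{pyber-1}; it is quoted from \cite{pyber} without argument, so there is nothing here to compare your attempt against. (The closest the paper comes is the one-line derivation of Theorem~\ref{pyber-3}: for a connected simple graph of even order, a maximum-size odd factor leaves a forest, giving $1+2=3$ odd subgraphs in that case.)

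On its own merits your proposal has a real gap, exactly where you yourself flag it. You reduce everything to the odd-order components of the even graph $G'=G-E(S)$ and then stop, writing that handling them ``is the crux of the argument''. Neither suggested fix is justified. Choosing $T$ and the pairing so that $G'$ has no odd-order component is impossible in general: if $G$ is Eulerian of odd order (for instance $C_5$ or $K_5$) then $OddV(G)=\emptyset$, hence $S=\emptyset$ and $G'=G$ is a single odd-order component regardless of any choices. The amalgamation route is not carried out either: your parenthetical ``e.g.'' does not actually give a decomposition of an odd-order even component $C$ into three odd subgraphs --- after deleting a vertex $v$, the graph $C-v$ is not an odd subgraph, since vertices of $C$ not adjacent to $v$ keep their even degree --- and the claim that the extra piece can be taken vertex-disjoint from the relevant part of $S$ presumes a single ``attachment point'', whereas $V(C)\cap V(S)$ may contain many vertices. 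As it stands, the proposal does not establish the bound of four.
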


This upper bound is sharp, for example, the wheel of four spokes ($W_4$, see Figure~1) cannot be decomposed into three odd subgraphs. In \cite{matrai} M\'atrai constructed an infinite family of graphs with the same property.

\begin{theorem}[\cite{pyber}] \label{pyber-2}
Every forest can be decomposed into two odd subgraphs.
\end{theorem}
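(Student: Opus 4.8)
The plan is to reduce to a single tree and then induct on the number of edges. If $F$ is a forest with components $T_1,\dots,T_p$, then any decomposition of each $T_i$ into two odd subgraphs can be combined componentwise, since a vertex retains its degree in each colour class when we pass from a component to all of $F$, and edgeless components contribute nothing; so it is enough to show that the edge set of a tree $T$ can be partitioned into a red set $R$ and a blue set $B$ so that in each of $R$ and $B$ every vertex of positive degree has odd degree. (I do not try to deduce this directly from Proposition~\ref{oddfactor}: the complement of an odd factor need not be an odd subgraph, because at an odd-degree vertex it can have positive even degree.) I would induct on $e(T)$. The base case $e(T)\le 1$ is immediate, and if $T$ is a star $K_{1,n}$ with $n\ge 2$ I colour all edges red when $n$ is odd and $n-1$ edges red and one edge blue when $n$ is even; in both cases a direct check shows $R$ and $B$ are odd subgraphs (the centre gets degree $n$, resp.\ degrees $n-1$ and $1$), using the convention that the empty graph is vacuously an odd subgraph.

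For the inductive step suppose $e(T)\ge 2$ and $T$ is not a star. Deleting all leaves of $T$ yields a subtree $T_0$ (it is connected because every internal vertex of a $T$-path between two non-leaves is itself a non-leaf), and since $T$ is not a star (and $e(T)\ge 2$) the tree $T_0$ has at least two vertices, hence a vertex $v$ with $\deg_{T_0}(v)=1$. In $T$ this $v$ has exactly one non-leaf neighbour $w$ and $\ell:=\deg_T(v)-1\ge 1$ leaf-neighbours. Let $T''$ be obtained from $T$ by deleting those $\ell$ leaf-neighbours of $v$ \emph{but keeping} $v$. Then $T''$ is a tree with $e(T'')=e(T)-\ell<e(T)$, the vertex $v$ is a leaf of $T''$ (its only remaining neighbour is $w$), and crucially every vertex of $T''$ other than $v$ has exactly the same degree as in $T$ — in particular $w$ is undisturbed, since the deleted leaves were adjacent only to $v$. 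By the induction hypothesis $T''$ has a valid red/blue partition; in it the pendant edge $vw$ receives some colour $\gamma$, so $v$ has degree $1$ in colour $\gamma$ and degree $0$ in the other colour.

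Finally I re-insert the $\ell$ leaf-edges at $v$ and colour them all $\gamma$ when $\deg_T(v)=\ell+1$ is odd (then $v$ has $\gamma$-degree $\ell+1$, which is odd, and $0$ otherwise), and all with the colour different from $\gamma$ when $\ell+1$ is even (then $v$ has $\gamma$-degree $1$ and other-colour degree $\ell$, both odd). In either case $v$ becomes fine, each re-inserted leaf has degree $1$ in one colour, and nothing else changed, so together with the colouring of $T''$ this is a valid red/blue partition of $T$, completing the induction. I expect the only delicate point to be exactly this step: the obvious reductions — deleting a single leaf, or deleting $v$ together with the edge $vw$ — change the degree parity at the attachment vertex $w$ and can force a vertex of positive even degree in one colour class. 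Deleting precisely the leaf-neighbours of $v$ avoids this, because it leaves $w$ untouched and turns $v$ into a leaf of the smaller tree (hence monochromatic there), reducing the remaining task to a trivial local parity adjustment at $v$.
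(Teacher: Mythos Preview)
The paper does not actually prove Theorem~\ref{pyber-2}; it is quoted from \cite{pyber} as a known background result and then invoked (without argument) in the one-paragraph deduction of Theorem~\ref{pyber-3}. So there is no proof in the paper to compare against.

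Your argument is correct and self-contained. The reduction to a single tree, the explicit star base case, and the inductive step via a ``penultimate'' vertex $v$ (a leaf of the pruned tree $T_0$) all work as written. You have correctly identified the one subtlety: deleting only the leaf-neighbours of $v$ leaves the attachment vertex $w$ untouched and turns $v$ into a leaf of $T''$, which forces the edge $vw$ to be monochromatic in the inductive colouring; restoring the $\ell$ pendant edges at $v$ then reduces to a one-vertex parity adjustment, and both parity cases check out. The justification that $T_0$ has at least two vertices when $T$ is not a star and $e(T)\ge 2$ is also sound. This is a clean elementary proof of the cited result.
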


\begin{theorem}[\cite{pyber}] \label{pyber-3}
Every connected simple graph of even order can be decomposed into three odd subgraphs.
\end{theorem}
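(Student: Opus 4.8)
The plan is to produce the three odd subgraphs in an asymmetric way: two of them will be the pieces of a suitable \emph{forest} obtained from Theorem~\ref{pyber-2}, while the third will be an odd factor of $G$, namely $G$ with that forest deleted. The only real content is to choose the forest $J$ so that it sits inside a spanning tree of $G$ (so that it genuinely is a forest, hence Theorem~\ref{pyber-2} applies to it) and, at the same time, deleting its edges flips the degree parity at exactly the vertices of $EvenV(G)$ and nowhere else, so that $G-J$ has all degrees odd.

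To set this up, I would first note that $|EvenV(G)|$ is even: $|V(G)|$ is even by hypothesis and $|OddV(G)|$ is even by the handshaking lemma, so $|EvenV(G)| = |V(G)| - |OddV(G)|$ is even. Fix a spanning tree $T$ of $G$. Since $T$ is connected and $|EvenV(G)|$ is even, there is a set $J \subseteq E(T)$ such that, in the subgraph $(V(G),J)$, every vertex of $EvenV(G)$ has odd degree and every vertex of $OddV(G)$ has even degree; one can take $J = \{\, e \in E(T) : \text{each component of } T-e \text{ contains an odd number of vertices of } EvenV(G)\,\}$, and a short induction on the leaves of $T$ confirms it has the stated property. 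Now for every vertex $v$ we have $\deg_{G-J}(v) = \deg_G(v)-\deg_J(v)$, which is odd, being ``even minus odd'' if $v\in EvenV(G)$ and ``odd minus even'' if $v\in OddV(G)$; moreover $\deg_J(v)\le\deg_G(v)$ while $\deg_J(v)\not\equiv\deg_G(v)\pmod 2$, so $\deg_J(v)<\deg_G(v)$ and hence $\deg_{G-J}(v)\ge 1$. Thus $E_3:=E(G)\setminus J$ is an odd factor of $G$, in particular an odd subgraph.

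It remains to handle $J$ itself: since $J\subseteq E(T)$, the subgraph $(V(G),J)$ is a forest, so by Theorem~\ref{pyber-2} it decomposes into two odd subgraphs $E_1$ and $E_2$. Then $E_1,E_2,E_3$ is the required partition of $E(G)$ into three odd subgraphs. I expect the main obstacle to be spotting the right object to remove rather than any hard estimate: the first thing one tries — take an arbitrary odd factor $F$ of $G$ (which exists by Proposition~\ref{oddfactor}) and decompose $G-F$ into two odd subgraphs — does not work in general, since $G-F$ can contain, for example, a triangle component whose three vertices all lie in $OddV(G)$, and a triangle does not split into two odd subgraphs. Deleting a forest instead, and letting Theorem~\ref{pyber-2} absorb it, avoids this; the price is the small amount of parity bookkeeping above (evenness of $|EvenV(G)|$, existence of $J$ inside $T$, and positivity of the resulting degrees), which is the one place where care is needed.
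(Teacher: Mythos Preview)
Your argument is correct, and the overall architecture --- one odd factor plus a forest split in two by Theorem~\ref{pyber-2} --- matches the paper's, but you arrive at the pair (odd factor, forest) from the opposite side. The paper simply takes an odd factor $F$ of \emph{maximum size} and observes that $G-E(F)$ must then be a forest: if a cycle $C$ survived in $G-E(F)$, then $F\cup E(C)$ would be a strictly larger odd factor, since adding a cycle changes every degree by $0$ or $2$. So your worry about an arbitrary odd factor leaving behind a triangle is exactly right, but the paper dissolves it with the single word ``maximum'' rather than by switching to the forest side. Your route --- build the forest first as an $EvenV(G)$-join $J$ inside a spanning tree, so that $G-J$ is automatically the odd factor --- is the natural dual and has the mild advantage of being explicitly constructive (no extremal choice), at the cost of the small parity bookkeeping you carried out. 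Either way the proof is a couple of lines once the right object is identified.
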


Since every connected simple graph $G$ of even order has an odd factor,
if we take an odd factor $F$ with maximum size, then $G-E(F)$ becomes a forest, and it can be decomposed into two odd subgraphs by Theorem~\ref{pyber-2}. Thus Theorem~\ref{pyber-3} follows.

\begin{theorem}[\cite{luzar}] \label{luzar}
Every connected multigraph can be decomposed into six odd subgraphs. And equality holds if and only if the multigraph is a Shannon triangle of type $(2,2,2)$  (see Figure~1).
\end{theorem}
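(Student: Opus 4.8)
The plan is to prove the inequality and the equality case together, splitting on the parity of $|G|$; the even‑order case is short, and essentially all of the work — including the emergence of the Shannon triangle — sits in the odd‑order case.

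I would start with the lower bound. Let $S$ be a Shannon triangle on $\{a,b,c\}$ all three of whose bundles of parallel edges have even size (this is what type $(2,2,2)$ records: each of the three edge–multiplicities is even). No odd subgraph $H$ of $S$ can meet all of $a,b,c$, since then $\deg_H(a)+\deg_H(b)+\deg_H(c)$ would be a sum of three odd numbers, hence odd, contradicting that it equals $2\,e(H)$. Therefore every odd subgraph of $S$ is contained in a single bundle, and being odd it consists of an odd number of parallel edges. Hence each of the three even, nonempty bundles must be spread over at least two distinct odd subgraphs, so any decomposition of $S$ into odd subgraphs uses at least six of them; and six suffices, e.g.\ by splitting each bundle of size $2k$ into one single edge and one bundle of $2k-1$ edges. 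Thus $S$ needs exactly six.

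For the upper bound with $|G|$ even, use Proposition~\ref{oddfactor} to pick an odd factor $F$ of maximum size. If $G-E(F)$ contained a cycle $C$ (a pair of parallel edges counting as a $2$‑cycle), then $F\cup E(C)$ would be an odd factor with more edges; so $G-E(F)$ is a forest and decomposes into two odd subgraphs by Theorem~\ref{pyber-2}, and together with $F$ we get a decomposition of $G$ into three odd subgraphs — comfortably below six, with no possibility of equality. The crux is $|G|$ odd, when $G$ has no odd factor. Here I would reduce to the even case: a leaf $v$ of a spanning tree of $G$ satisfies that $G-v$ is connected and of even order, so, taking a maximum odd factor $F$ of $G-v$, the graph $\mathcal F:=(G-v)-E(F)$ is a forest. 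Since $F\subseteq G-v$, the multigraph $G-E(F)$ is exactly the forest $\mathcal F$ together with the single vertex $v$ and all of its incident edges, and $F$ is already an odd subgraph; so it remains to decompose $G-E(F)$ into at most five odd subgraphs. One may assume $G-E(F)$ is connected (otherwise treat its components separately and re‑use the same palette of colors, since the components are vertex‑disjoint). Decomposing a connected ``forest plus one extra vertex $v$'': the forest $\mathcal F$ splits into two odd subgraphs by Theorem~\ref{pyber-2}, and the remaining edges form the bouquet at $v$; a bouquet is decomposed into odd subgraphs by cutting each even bundle at $v$ into an odd part and another odd part, but done blindly this can cost up to four further colors, so the real content is to reroute a bounded number of $v$‑edges into (and out of) the two forest–classes — or to choose $F$ and $v$ more carefully — so that the total stays at five. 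Controlling exactly how the parallel edges at $v$ interact with the rest is the main obstacle.

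The equality case then falls out of this analysis: the even‑order bound is $3<6$, and in the odd‑order reduction six odd subgraphs are forced only when the odd factor $F$, the forest $\mathcal F$, and the bouquet at $v$ are simultaneously as small and as rigid as they can be — which, tracing the degeneracies back through the reduction, leaves no room for anything except the situation in which every odd subgraph of $G$ is a single edge and $G$ lives on three vertices, i.e.\ $G$ is a Shannon triangle of type $(2,2,2)$; the converse was established in the first paragraph. I expect the genuinely delicate steps to be (i) proving that ``forest plus one extra vertex'' decomposes into at most five odd subgraphs, sharp only at the Shannon triangle, and (ii) verifying that the reduction via a spanning‑tree leaf preserves exactly enough structure to pin the equality case down.
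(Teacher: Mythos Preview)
The paper does not prove this theorem at all: it is quoted from \cite{luzar} as background, with no argument supplied. There is therefore no ``paper's own proof'' to compare your proposal against.

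On the merits of your sketch itself: the lower-bound paragraph for the Shannon triangle of type $(2,2,2)$ is correct and complete, and the even-order upper bound (maximum odd factor leaves a forest, hence three odd subgraphs suffice) is exactly the short remark the present paper makes after Theorem~\ref{pyber-3}. The genuine gap is the odd-order case. You reduce to decomposing a graph of the form ``forest plus one extra vertex $v$ with all its incident (possibly parallel) edges'' into at most five odd subgraphs, but you do not prove this; you explicitly flag it as ``the real content'' and only gesture at ``rerouting a bounded number of $v$-edges''. That step is the whole theorem in the hard case, and nothing you have written constrains the interaction between the $v$-bundles and the two forest colour classes enough to force the total below six. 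Likewise, your derivation of the equality case (``tracing the degeneracies back through the reduction'') is not an argument: you would need to show that whenever $G$ is \emph{not} a type-$(2,2,2)$ Shannon triangle, some choice of $v$ and $F$ makes five colours suffice, and nothing in the proposal does this.

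In short: your outline is a plausible strategy, the easy parts are done, but the decisive odd-order lemma and the sharpness characterisation are both missing. Since the present paper simply cites \cite{luzar}, if you want an actual proof you should consult that reference rather than the paper at hand.
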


\begin{theorem}[\cite{petrusevski}] \label{petrusevski}
 Every connected  multigraph can be decomposed into four odd subgraphs except for the Shannon triangles of type $(2,2,2)$ and $(2,2,1)$ (see Figure~1).
\end{theorem}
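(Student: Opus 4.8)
I would split into two cases according to the parity of $|G|$. \emph{Case 1: $|G|$ is even.} By Proposition~\ref{oddfactor} the multigraph $G$ has an odd factor; choose one, $F$, of maximum size. If $G-E(F)$ contained a cycle $C$ (possibly just a pair of parallel edges), then $C$ would be edge-disjoint from $F$ and every vertex would have even degree in $C$, so $F\cup C$ would be a spanning odd subgraph of strictly larger size, a contradiction. Hence $G-E(F)$ is a forest, which decomposes into two odd subgraphs by Theorem~\ref{pyber-2}; together with $F$ this gives a decomposition of $G$ into at most three odd subgraphs, so here no exceptional graphs occur.

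\emph{Case 2: $|G|$ is odd.} Fix a spanning tree $T$ of $G$ and a leaf $w$ of $T$. Then $G-w$ is connected and of even order, so it has an odd factor by Proposition~\ref{oddfactor}; take one, $F$, of maximum size. Exactly as in Case 1, $(G-w)-E(F)$ is a forest $\Phi$ on $V(G)\setminus\{w\}$, so that $J:=G-E(F)$ consists of $\Phi$ together with the vertex $w$ and all $\deg_G(w)$ edges incident with it. Since $F$ is one odd subgraph, it suffices to decompose $J$ into at most three odd subgraphs. For this I would first decompose $\Phi$ into two odd subgraphs $\Phi_1$ and $\Phi_2$ (Theorem~\ref{pyber-2}), then pair up the edges at $w$ and absorb each pair into the $\Phi_i$'s --- rerouting along $\Phi$-paths and recoloring their edges as needed to keep every degree odd --- placing a single possibly leftover edge at $w$ into a third class. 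One then shows that the only configurations for which every such attempt is forced to create a vertex of positive even degree are the two small graphs in which $G$ is a Shannon triangle of type $(2,2,2)$ or of type $(2,2,1)$; for these, a direct check (or Theorem~\ref{luzar} in the $(2,2,2)$ case) shows that four odd subgraphs genuinely do not suffice, which is exactly the stated exception.

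The hard part will be this last step: turning the phrase ``absorb the edges at $w$ along $\Phi$-paths'' into a precise and exhaustive argument. Oddness of a subgraph is rigid --- adding or deleting a single edge flips the parity at both of its endpoints, and the resulting correction can cascade --- so one must carefully keep track of which vertices currently lie in $\Phi_1$, in $\Phi_2$, and in the third class, and perform local exchanges along paths of $\Phi$. The real content is to isolate the finitely many irreducible obstructions to this process and to identify them with the Shannon triangles of type $(2,2,2)$ and $(2,2,1)$.
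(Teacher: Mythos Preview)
The paper does not prove this theorem at all: it is quoted with a citation to Petru\v{s}evski's paper and used only as background. So there is no ``paper's own proof'' to compare your attempt against.

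On the substance of your sketch: Case~1 is fine and is exactly the well-known argument the present paper recalls right after Theorem~\ref{pyber-3}. The difficulty is entirely in Case~2, and there your proposal is not yet a proof. You reduce to decomposing $J=\Phi\cup\{w\}\cup\{\text{all edges at }w\}$ into at most three odd subgraphs, but the step ``pair up the edges at $w$ and absorb each pair into the $\Phi_i$'s, rerouting along $\Phi$-paths'' is precisely the content of the theorem and is left as an intention rather than an argument. Toggling an edge of $\Phi$ to fix the parity at one endpoint breaks it at the other, so ``rerouting'' is really a search for suitable $T$-joins in $\Phi$, and there is no a priori reason such joins exist with the right endpoints; you have not specified which vertex to delete, which odd factor to take, or which $2$-coloring of $\Phi$ to start from, and the claim that the only obstructions collapse to the two Shannon triangles is asserted, not derived. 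Petru\v{s}evski's actual proof is a full journal paper and does not proceed by this kind of ad hoc local repair; if you want to pursue your outline you would need, at minimum, a structural lemma saying when a forest-plus-one-vertex graph admits a decomposition into three odd subgraphs, together with an argument that a suitable choice of $w$ always lands you in the good case unless $G$ is one of the two exceptional triangles.
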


\begin{figure}[H]
 \begin{center}
 \begin{tikzpicture}
  \tikzstyle{vertex}=[draw,circle,fill=black,minimum size=3,inner sep=0]
  
  \node[vertex] (w) at (0,0) {};
  \node[vertex] (v1) at (0:1) {};
  \node[vertex] (v2) at (90:1) {};
  \node[vertex] (v3) at (180:1) {};
  \node[vertex] (v4) at (270:1) {};
 
  \draw (w) -- (v1);
  \draw (w) -- (v2);
  \draw (w) -- (v3);
  \draw (w) -- (v4);
  \draw (v1) -- (v2) -- (v3) -- (v4) -- (v1);
  
  \node at (-1,1) {$W_4$};
  
 \begin{scope}[shift={(3,0)}]
 
  \node[vertex] (a1) at (90:1.25) {};
  \node[vertex] (a2) at (210:1.25) {};
  \node[vertex] (a3) at (330:1.25) {};
  
  \draw (a1) to [bend right=30] (a2);
  \draw (a2) to [bend right=30] (a3);
  \draw (a3) to [bend right=30] (a1);
  \draw (a1) to [bend left=30] (a2);
  \draw (a2) to [bend left=30] (a3);
  \draw (a3) to [bend left=30] (a1);
  \draw (a1) to node[pos=0.5, fill=white, rotate=-30] {odd} (a2);
  \draw (a2) to node[pos=0.5, fill=white, rotate=-90] {odd} (a3);
  \draw (a3) to node[pos=0.5, fill=white, rotate=30] {odd} (a1);
 \end{scope}
 
 \begin{scope}[shift={(6,0)}]
 
  \node[vertex] (a1) at (90:1.25) {};
  \node[vertex] (a2) at (210:1.25) {};
  \node[vertex] (a3) at (330:1.25) {};
  
  \draw (a1) to [bend right=30] (a2);
  \draw (a2) to [bend right=30] (a3);
  \draw (a3) to [bend right=30] (a1);
  \draw (a1) to [bend left=30] (a2);
  \draw (a2) to [bend left=30] (a3);
  \draw (a3) to [bend left=30] (a1);
  \draw (a1) to node[pos=0.5, fill=white, rotate=-30] {even} (a2);
  \draw (a2) to node[pos=0.5, fill=white, rotate=-90] {odd} (a3);
  \draw (a3) to node[pos=0.5, fill=white, rotate=30] {odd} (a1);
 \end{scope}
 
 \begin{scope}[shift={(1.5,-3)}]
 
  \node[vertex] (a1) at (90:1.25) {};
  \node[vertex] (a2) at (210:1.25) {};
  \node[vertex] (a3) at (330:1.25) {};
  
  \draw (a1) to [bend right=30] (a2);
  \draw (a2) to [bend right=30] (a3);
  \draw (a3) to [bend right=30] (a1);
  \draw (a1) to [bend left=30] (a2);
  \draw (a2) to [bend left=30] (a3);
  \draw (a3) to [bend left=30] (a1);
  \draw (a1) to node[pos=0.5, fill=white, rotate=-30] {even} (a2);
  \draw (a2) to node[pos=0.5, fill=white, rotate=-90] {even} (a3);
  \draw (a3) to node[pos=0.5, fill=white, rotate=30] {odd} (a1);
 \end{scope}
 
 \begin{scope}[shift={(4.5,-3)}]
 
  \node[vertex] (a1) at (90:1.25) {};
  \node[vertex] (a2) at (210:1.25) {};
  \node[vertex] (a3) at (330:1.25) {};
  
  \draw (a1) to [bend right=30] (a2);
  \draw (a2) to [bend right=30] (a3);
  \draw (a3) to [bend right=30] (a1);
  \draw (a1) to [bend left=30] (a2);
  \draw (a2) to [bend left=30] (a3);
  \draw (a3) to [bend left=30] (a1);
  \draw (a1) to node[pos=0.5, fill=white, rotate=-30] {even} (a2);
  \draw (a2) to node[pos=0.5, fill=white, rotate=-90] {even} (a3);
  \draw (a3) to node[pos=0.5, fill=white, rotate=30] {even} (a1);
 \end{scope}
 \end{tikzpicture}
 \caption{The wheel $W_4$ and the Shannon triangles of type (1,1,1), (2,1,1), (2,2,1), (2,2,2).} \label{shannon}
 \end{center}
\end{figure}
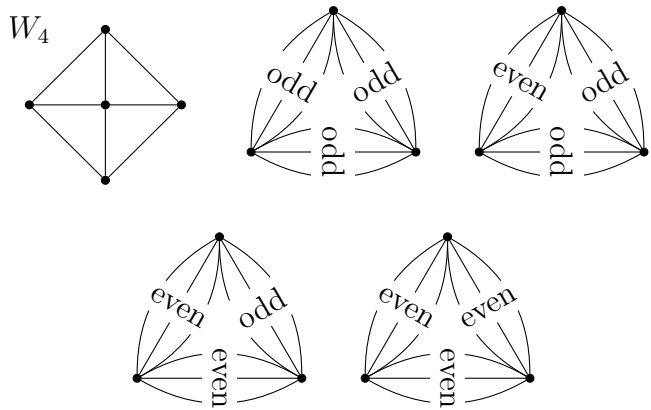

We say that $G$ can be covered by $n$ odd subgraphs if its edge set can be covered by $n$ sets $E_1, \ldots, E_n$ (not necessarily  disjointly)  so that for every $i \in \{ 1, \ldots, n \}$, $E_i$ forms an odd subgraph.

\begin{theorem}[\cite{matrai}]
Every connected  multigraph of odd order can be covered by three odd subgraphs.
\end{theorem}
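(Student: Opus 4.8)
The plan is to dispose of $|G|=1$ trivially and, for $|G|=n\ge 3$, to reduce to a graph of even order by deleting a single vertex. Pick a non-cut-vertex $v$ of $G$ (for instance a leaf of a spanning tree), so that $G-v$ is connected of even order $n-1$. By Proposition~\ref{oddfactor}, $G-v$ has an odd factor; let $F$ be one of maximum size. If $(G-v)-E(F)$ contained a cycle $C$ (possibly a pair of parallel edges), then $F\cup C$, in which every vertex of $C$ has gained two edges, would be an odd factor of $G-v$ with more edges — a contradiction; hence $F':=(G-v)-E(F)$ is a forest, which by Theorem~\ref{pyber-2} splits into two odd subgraphs $B_1,B_2$. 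Thus $E(G-v)$ is partitioned by the three odd subgraphs $F,B_1,B_2$, and the whole problem reduces to folding the edges incident with $v$ into this triple. Here it matters that we are \emph{covering} and not decomposing: an edge may be placed in several of the three subgraphs, and in particular an already-used edge may be thrown into one of them solely to restore the parity of $v$'s degree there.

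The basic move is that an edge $vu$ can be added to one of the three odd subgraphs $S$ without spoiling oddness as soon as $u\notin V(S)$, since then $u$ and $v$ get degree $1$ in $S$. As $F$ is a \emph{spanning} odd factor of $G-v$, every neighbour of $v$ already lies in $V(F)$, so the edges at $v$ must be routed into $B_1$ or $B_2$. A neighbour $u$ of $v$ can be routed into $B_i$ exactly when $u\notin V(B_i)$; and since $\deg_{B_1}(u)+\deg_{B_2}(u)=\deg_{F'}(u)$ with each summand odd or zero, this fails precisely when $\deg_{F'}(u)$ is even and positive, i.e.\ when $u$ lies in $V(F)\cap V(B_1)\cap V(B_2)$ — call such a $u$ a \emph{saturated} neighbour of $v$. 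Saturated neighbours genuinely arise: if $\deg_{G-v}(u)$ is odd and at least $3$, then no maximum odd factor need absorb all of the edges at $u$, and whichever one we took leaves $u$ with even positive degree in $F'$. Coping with saturated neighbours is the crux of the proof.

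For a saturated neighbour $u$ I would give up the spanning-odd-factor template near $u$ and instead reorganize the three-element cover of $G-v$ so that all edges at $u$ end up in a single piece. When $\deg_{G-v}(u)$ is odd, the star at $u$ is itself an odd subgraph, so one can attempt to make one of the three pieces equal to (or contain) that star; then $u$ is absent from the other two pieces and $vu$ becomes routable into one of them. When this cannot be done compatibly with covering the rest of $G-v$ by the remaining pieces, an alternating-trail exchange starting at $u$ can be used to move the parity defect off $u$. The step I expect to be the real fight is performing such modifications simultaneously for \emph{all} neighbours of $v$ — keeping every one of the three pieces odd, and making $v$'s degree odd in each piece that receives an edge of $v$ — which should be manageable after a judicious choice of $v$ (say, of small degree) and a finite case analysis.
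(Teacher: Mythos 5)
Your reduction is sound as far as it goes: deleting a non-cut-vertex $v$, taking a maximum odd factor $F$ of $G-v$, observing that $(G-v)-E(F)$ is a forest and splitting it into odd subgraphs $B_1,B_2$ via Theorem~\ref{pyber-2} is exactly the argument the paper uses to derive Theorem~\ref{pyber-3}. But everything after that point is a description of the difficulties rather than a resolution of them, and the difficulties are precisely where the theorem lives. The first unresolved obstruction is the one you name: a saturated neighbour $u$ (with $\deg_{F'}(u)$ positive and even) can be routed into neither $B_1$ nor $B_2$. The proposed remedies --- ``make one of the three pieces contain the star at $u$'', ``an alternating-trail exchange to move the parity defect off $u$'' --- are never defined: you do not say where the defect moves, why the process terminates, or why the exchanges performed for different saturated neighbours of $v$ do not interfere with one another or destroy the oddness of $F$, $B_1$ or $B_2$.

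The second obstruction you mention only in passing, and it already bites in the ``easy'' case where every neighbour of $v$ is unsaturated: the number of edges of $v$ assigned to each used piece must be odd. If, say, $\deg_G(v)=3$ with two neighbours admissible only for $B_1$ and one admissible only for $B_2$, then $v$ receives even degree in $B_1$; and the covering freedom does not rescue you, because placing one of those edges additionally into $B_2$ is forbidden (its other endpoint already lies in $V(B_2)$) and placing it into $F$ is forbidden ($F$ is spanning). Deferring both issues to ``a judicious choice of $v$ and a finite case analysis'' that is not carried out leaves the proof incomplete at its crux. Note also that the paper offers no proof to compare against --- it quotes the result from M\'atrai, whose published argument is organised quite differently --- so the burden of closing these gaps is entirely on your construction.
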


In this paper we study the decomposability of a  multigraph into an even subgraph and an odd subgraph, and into two odd subgraphs. We also remark that the case of decomposing into two even subgraphs is trivial.

\begin{proposition} \label{even+odd}
 A multigraph $G$ can be decomposed into an even subgraph and an odd subgraph if and only if every component of $\langle OddV(G) \rangle_G$ has even order.
\end{proposition}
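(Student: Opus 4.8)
The plan is to reduce the statement to the existence of an odd factor in the induced subgraph $\langle OddV(G) \rangle_G$ and then quote Proposition~\ref{oddfactor}. Throughout, suppose we have a decomposition $E(G)=E_{\mathrm{even}}\,\dot\cup\, E_{\mathrm{odd}}$, where $E_{\mathrm{even}}$ forms an even subgraph and $E_{\mathrm{odd}}$ forms an odd subgraph, and keep in mind the convention that an odd subgraph is determined by (and has no isolated vertices beyond those forced by) its edge set, while an even subgraph may contain vertices of degree $0$.

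\emph{Forced colouring of edges at even-degree vertices.} First I would observe that every edge incident with a vertex of $EvenV(G)$ must belong to $E_{\mathrm{even}}$. Indeed, if $v\in EvenV(G)$ were incident with an edge of $E_{\mathrm{odd}}$, then $\deg_{E_{\mathrm{odd}}}(v)$ would be odd, hence $\deg_{E_{\mathrm{even}}}(v)=\deg_G(v)-\deg_{E_{\mathrm{odd}}}(v)$ would be odd, contradicting that $E_{\mathrm{even}}$ induces an even subgraph. Consequently $E_{\mathrm{odd}}\subseteq E(\langle OddV(G)\rangle_G)$, since any edge of $G$ with an endpoint outside $OddV(G)$ is coloured even.

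\emph{Reformulation as an odd factor.} For $v\in OddV(G)$ we have $\deg_{E_{\mathrm{even}}}(v)=\deg_G(v)-\deg_{E_{\mathrm{odd}}}(v)$ with $\deg_G(v)$ odd, so requiring $E_{\mathrm{even}}$ to be even at $v$ is equivalent to requiring $\deg_{E_{\mathrm{odd}}}(v)$ to be odd. Together with the inclusion $E_{\mathrm{odd}}\subseteq E(\langle OddV(G)\rangle_G)$ from the previous step, this says precisely that $E_{\mathrm{odd}}$ is the edge set of an odd factor of $\langle OddV(G)\rangle_G$. Conversely, starting from any odd factor $F$ of $\langle OddV(G)\rangle_G$, I would check directly that $E_{\mathrm{odd}}:=E(F)$ and $E_{\mathrm{even}}:=E(G)\setminus E(F)$ give a valid decomposition: a vertex outside $OddV(G)$ retains all of its (even number of) edges in $E_{\mathrm{even}}$, and each $v\in OddV(G)$ satisfies $\deg_{E_{\mathrm{odd}}}(v)=\deg_F(v)$ (odd) and $\deg_{E_{\mathrm{even}}}(v)=\deg_G(v)-\deg_F(v)$ (even). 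Hence $G$ decomposes into an even and an odd subgraph if and only if $\langle OddV(G)\rangle_G$ has an odd factor, and by Proposition~\ref{oddfactor} this holds exactly when every component of $\langle OddV(G)\rangle_G$ has even order.

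I do not expect a genuine obstacle here: the argument is a short forced-choice analysis followed by a black-box application of Proposition~\ref{oddfactor}. The only points requiring mild care are the treatment of degree-$0$ vertices under the two notions of subgraph, and the clean classification of edges of $G$ according to how many of their endpoints lie in $OddV(G)$, which underlies the "forced colouring" step.
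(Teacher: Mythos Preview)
Your argument is correct and follows exactly the paper's approach: you observe that all edges incident with vertices of even degree must lie in the even subgraph, reduce the problem to the existence of an odd factor in $\langle OddV(G)\rangle_G$, and then invoke Proposition~\ref{oddfactor}. The paper's proof is the same, only stated more tersely.
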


\begin{proof}
 Such a decomposition exists if and only if there is an odd factor in $\langle OddV(G) \rangle_G$, since all edges incident with any vertex of even degree must belong to the even subgraph. So by Proposition \ref{oddfactor}, the proposition follows.
\end{proof}

Since an odd factor can be found in polynomial time, we can conclude the following.

\begin{corollary} \label{even+odd_poly}
 There is a polynomial time algorithm for decomposing a multigraph into an odd subgraph and an even subgraph or showing the non-exis\-tence of such a decomposition.
\end{corollary}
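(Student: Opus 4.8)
The plan is to turn the proof of Proposition~\ref{even+odd} into an algorithm, since that proof already exhibits the structure of every valid decomposition. First I would compute $\deg_G(v)$ for all $v\in V(G)$, thereby determining the set $OddV(G)$; this costs $O(|G|+e(G))$. Then I would form the induced subgraph $H:=\langle OddV(G)\rangle_G$ and compute its connected components, again in linear time.

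Next, by Proposition~\ref{oddfactor}, $H$ has an odd factor if and only if every component of $H$ has even order, and by Proposition~\ref{even+odd} this is precisely the condition for $G$ to admit the desired decomposition. So if some component of $H$ has odd order, the algorithm reports that no decomposition exists. Otherwise it invokes the polynomial-time procedure for finding an odd factor (Problem $42$ of \S 7 in \cite{lovasz}): in each component of $H$, say on vertices $v_1,\ldots,v_{2m}$, fix for every $i\in\{1,\ldots,m\}$ a $v_i$--$v_{i+m}$ path found by breadth-first search, and let $F$ consist of those edges occurring an odd number of times among these paths; carrying this out over all components of $H$ yields an odd factor $F$ of $H$ in polynomial time.

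Finally the algorithm outputs $F$ as the odd subgraph and $G-E(F)$ as the even subgraph. Correctness is a short parity check: every $v\in OddV(G)$ is a vertex of $H$, so $\deg_F(v)$ is odd and hence $\deg_{G-E(F)}(v)=\deg_G(v)-\deg_F(v)$ is even; every $v\in EvenV(G)$ lies outside $H$, so $F$ contains no edge incident with $v$ and $\deg_{G-E(F)}(v)=\deg_G(v)$ is even; thus $G-E(F)$ is an even subgraph, while $F$ is an odd subgraph by construction, and $\{E(F),\,E(G)\setminus E(F)\}$ partitions $E(G)$.

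The statement is essentially immediate from the two propositions, so there is no genuine obstacle; the only points requiring care are confirming that the referenced odd-factor procedure runs in polynomial time on multigraphs — it does, since each of the $O(|G|)$ path searches costs $O(|G|+e(G))$ and computing edge-multiplicity parities is cheap — and checking that the complementation step is performed on the edge multiset, so that the parity bookkeeping above is valid.
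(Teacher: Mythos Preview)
Your proposal is correct and follows essentially the same approach as the paper: the paper's proof is the single sentence ``Since an odd factor can be found in polynomial time, we can conclude the following,'' relying on Proposition~\ref{even+odd} and the remark after Proposition~\ref{oddfactor}. You have simply unpacked that sentence into explicit algorithmic steps and added the parity verification; nothing in the underlying reasoning differs.
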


\begin{remark}
 The case of the decomposability into two even subgraphs is trivial: a multigraph can be decomposed into two even subgraphs if and only if every vertex of the multigraph has even degree.
\end{remark}

The following two theorems are our main results.

\begin{theorem} \label{n&s_condition}
 Let $G$ be a multigraph and let $\mathcal{X}$ denote the set of  components of $\langle OddV(G) \rangle_G$, and let $\mathcal{Y}$ and $\mathcal{Z}$ denote the sets of components of $\langle EvenV(G) \rangle_G$ with odd order and even order, respectively.
 Now $G$ can be decomposed into two odd subgraphs if and only if for every $\mathcal{S} \subseteq \mathcal{Y} \cup \mathcal{Z}$ with $|\mathcal{S} \cap \mathcal{Y}|$  odd, there exists $X \in \mathcal{X}$ that has neighbors in odd number of components of $\mathcal{S}$.
\end{theorem}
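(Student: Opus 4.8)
The plan is to recast ``$G$ decomposes into two odd subgraphs'' as the solvability of a system of linear equations over $\mathbb{F}_2$, and then to extract the stated criterion from the Fredholm alternative. I may assume $G$ has no isolated vertex, since such a vertex lies in no odd subgraph and is irrelevant.

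First I would record the local structure forced on any decomposition into a red and a blue odd subgraph. A vertex of odd degree cannot be incident with edges of both colours (odd $+$ odd is even, which is not its degree), so it is \emph{monochromatic}; hence each component $X\in\mathcal{X}$ of $\langle OddV(G)\rangle_G$ receives one colour $x_X\in\mathbb{F}_2$ (say $1$ for blue), and every edge meeting $X$ — inside $X$ or a cross edge to an even-degree vertex — inherits $x_X$. A vertex $w$ of positive even degree must instead carry edges of both colours with $\deg_{\textrm{red}}(w)$ and $\deg_{\textrm{blue}}(w)$ both odd; since $\deg_G(w)$ is even, it is enough to make $\deg_{\textrm{blue}}(w)$ odd (then $\deg_{\textrm{red}}(w)=\deg_G(w)-\deg_{\textrm{blue}}(w)$ is odd and positive too). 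So a decomposition is exactly a choice of the colours $(x_X)_{X\in\mathcal{X}}$ together with a $2$-colouring of the edges of $\langle EvenV(G)\rangle_G$ under which every even-degree vertex gets odd blue degree.

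Next, fixing the $x_X$, I would ask when the colouring of $\langle EvenV(G)\rangle_G$ can be completed. Its edge set is partitioned among its components, i.e.\ over $\mathcal{Y}\cup\mathcal{Z}$, and the requirement decouples over them. On a component $C$, the blue edges inside $C$ must form a subgraph whose degree at each $w\in C$ has a prescribed parity, namely $1$ minus the parity of the number of (already forced) blue cross edges at $w$. As $C$ is connected, such a subgraph exists if and only if the sum over $w\in C$ of these prescribed parities is even — the standard existence criterion for parity-constrained subgraphs (pair up the vertices whose prescribed parity is odd and take the symmetric difference of joining paths). Summing over $w\in C$ and grouping the forced blue cross edges according to the component $X\in\mathcal{X}$ they go to, this becomes the single equation
\[ \sum_{X\in\mathcal{X}} e_G(C,X)\,x_X \;\equiv\; |C| \pmod 2 , \]
where $e_G(C,X)$ denotes the number of edges of $G$ between $V(C)$ and $V(X)$, and only $e_G(C,X)\bmod 2$ enters because the two copies of a parallel cross edge get the same colour.

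Therefore $G$ decomposes into two odd subgraphs if and only if the $\mathbb{F}_2$-system given by the displayed equation for all $C\in\mathcal{Y}\cup\mathcal{Z}$ is solvable. By the Fredholm alternative over $\mathbb{F}_2$, this fails precisely when some subset $\mathcal{S}\subseteq\mathcal{Y}\cup\mathcal{Z}$ of the equations sums to an equation with zero left-hand side but right-hand side $1$, that is, $\sum_{C\in\mathcal{S}} e_G(C,X)\equiv 0\pmod 2$ for every $X\in\mathcal{X}$ (equivalently, every $X$ has neighbours in an even number of components of $\mathcal{S}$) while $\sum_{C\in\mathcal{S}}|C|$ is odd, i.e.\ $|\mathcal{S}\cap\mathcal{Y}|$ is odd. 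Negating yields exactly the condition in the statement. The step I expect to require the most care is the middle one: applying the parity-subgraph existence lemma correctly to \emph{every} component of $\langle EvenV(G)\rangle_G$ (singletons included, and using the no-isolated-vertex assumption to ensure each even-degree vertex is genuinely forced into both colour classes), and tracking that multiplicities matter only mod $2$, so that ``neighbours in an odd number of components'' is read as the mod-$2$ count of the relevant edges. After the linearisation, the rest is routine linear algebra over $\mathbb{F}_2$.
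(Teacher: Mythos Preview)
Your argument is correct and follows essentially the same route as the paper: force each component of $\langle OddV(G)\rangle_G$ to be monochromatic, reduce the completion on $\langle EvenV(G)\rangle_G$ to a $T$-join existence problem on each component, obtain one $\mathbb{F}_2$-equation per component $C\in\mathcal{Y}\cup\mathcal{Z}$ with right-hand side $|C|\bmod 2$, and invoke the Fredholm alternative. The paper separates the first two steps into an auxiliary theorem (the partition criterion $e_G(R,Y)$ odd, $e_G(R,Z)$ even) before setting up the linear system, but the content is identical; your remark that ``neighbours in an odd number of components'' must be read via the $G^*$-adjacency (i.e.\ $e_G(X,C)$ odd) is exactly the interpretation the paper uses in its condition~(iii).
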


\begin{theorem} \label{2odd_poly} 
 There is a polynomial time algorithm for decomposing a multigraph into two odd subgraphs or showing the non-existence of such a decomposition.
\end{theorem}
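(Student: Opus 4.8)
The plan is to make the criterion of Theorem~\ref{n&s_condition} algorithmic by recasting it as a system of linear equations over $\mathrm{GF}(2)$, solving that system by Gaussian elimination, and — when it is solvable — turning a solution into an explicit two-colouring of $E(G)$ in polynomial time. First I would record the two structural facts that drive everything. A vertex $v$ with $\deg_G(v)$ odd cannot, in a decomposition into a red odd subgraph and a blue odd subgraph, be incident with edges of both colours, since otherwise $\deg_{\textrm{red}}(v)$ and $\deg_{\textrm{blue}}(v)$ would both be odd and positive while summing to the odd number $\deg_G(v)$; hence all edges of $G$ incident with a fixed component $X$ of $\langle OddV(G)\rangle_G$ — including the edges joining $X$ to $\langle EvenV(G)\rangle_G$ — receive one common colour, which I encode by a variable $x_X\in\mathrm{GF}(2)$, with $x_X=1$ meaning red. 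Secondly, a vertex of positive even degree must be incident with edges of both colours, of odd degree in each.

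Next I would analyse one component $C$ of $\langle EvenV(G)\rangle_G$ at a time. Since $G$ has no edge between two components of $\langle EvenV(G)\rangle_G$ and none between two components of $\langle OddV(G)\rangle_G$, once all $x_X$ are chosen the only edges incident with $C$ whose colour is still free are those inside $C$, and the number of red edges at a vertex $v\in V(C)$ leading to $\langle OddV(G)\rangle_G$ equals $\sum_{X}e_G(\{v\},X)\,x_X$ modulo $2$. So a valid colouring inside $C$ is precisely a subgraph $F\subseteq E(C)$ (the red edges inside $C$) with $\deg_F(v)\equiv 1+\sum_{X}e_G(\{v\},X)\,x_X\pmod 2$ for every $v\in V(C)$, the blue side then being automatically odd on $C$. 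Such an $F$ in the connected multigraph $C$ exists iff the sum over $V(C)$ of the prescribed parities is $0$ in $\mathrm{GF}(2)$ — necessity is the handshaking lemma, sufficiency comes from pairing the vertices of prescribed parity $1$ and taking the symmetric difference of connecting paths, as in the odd-factor construction recalled after Proposition~\ref{oddfactor} — and in that case $F$ is produced in polynomial time from a spanning tree of $C$. Summing the prescribed parities over $V(C)$ collapses this feasibility test into the single equation
\[ \sum_{X}\bigl(e_G(C,X)\bmod 2\bigr)\,x_X \;=\; |C|\bmod 2 \]
over $\mathrm{GF}(2)$, whose right-hand side is $1$ exactly when $C\in\mathcal{Y}$ and $0$ when $C\in\mathcal{Z}$.

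Collecting these equations over all components of $\langle EvenV(G)\rangle_G$ yields a system $Mx=c$ with $|\mathcal{Y}\cup\mathcal{Z}|$ equations and $|\mathcal{X}|$ unknowns, and I would prove that $G$ decomposes into two odd subgraphs iff $Mx=c$ is consistent. Forward: a given decomposition fixes the $x_X$ by the first structural fact, and its red edges inside each $C$ certify the corresponding equation. Conversely, take any solution $x$, colour red every edge incident with an $X\in\mathcal{X}$ having $x_X=1$, colour the remaining such edges blue, and inside each $C$ colour red a prescribed-parity subgraph $F_C$ as above; a vertex-by-vertex check then shows both colour classes are odd subgraphs (an odd-degree vertex keeps its full, hence odd, degree in its single colour; an even-degree vertex ends with odd degree in both). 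This equivalence is also what sits behind Theorem~\ref{n&s_condition}: the families $\mathcal{S}$ that violate its condition are exactly the supports of left-kernel vectors of $M$ pairing nontrivially with $c$, i.e.\ the inconsistency certificates of $Mx=c$. The algorithm is therefore: compute the degrees, the families $\mathcal{X},\mathcal{Y},\mathcal{Z}$ and the augmented matrix $(M\mid c)$; run Gaussian elimination over $\mathrm{GF}(2)$; if the system is inconsistent report non-existence, otherwise extend the computed solution to a two-colouring of $E(G)$ as above and output it. Every step is polynomial.

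The real work lies in the correctness of the reduction, not in bounding the running time, and the main obstacle is the multigraph bookkeeping. Parallel edges between a component $X\in\mathcal{X}$ and a component $C$ must enter the coefficient $e_G(C,X)\bmod 2$ through their number modulo $2$, not through mere adjacency; the prescribed-parity subgraph $F_C$ must be realizable inside $C$ alone; and one must verify that stitching the per-component choices together never leaves an even-degree vertex monochromatic and never disturbs the odd degrees already guaranteed at odd-degree vertices. (An isolated vertex of $G$, if one allows it, is a one-point component in $\mathcal{Y}$ and produces the unsatisfiable equation $0=1$, in agreement with Theorem~\ref{n&s_condition}.) Once the equivalence between decomposability of $G$ and consistency of $Mx=c$ is secured with these points handled, the polynomial bound is immediate from Gaussian elimination together with the spanning-tree construction of prescribed-parity subgraphs.
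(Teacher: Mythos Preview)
Your proposal is correct and is essentially the paper's own approach: encode the colour of each component of $\langle OddV(G)\rangle_G$ by a $\mathrm{GF}(2)$ variable, obtain one linear equation per component of $\langle EvenV(G)\rangle_G$ (right-hand side $1$ for $\mathcal{Y}$, $0$ for $\mathcal{Z}$), solve by Gaussian elimination, and rebuild the colouring inside each even-component via a $T$-join. The only cosmetic difference is that the paper first packages the equivalence as a separate partition criterion (Theorem~\ref{partition-thm}) before writing down the linear system, whereas you derive the system directly.
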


\section{Proofs of Theorems~\ref{n&s_condition} and \ref{2odd_poly} }

We begin with a definition and a proposition on it.

\begin{defi}
 Let $G$ be a multigraph and $T \subseteq V(G)$. A subgraph $J$ of $G$ is called a $T$-join if $OddV(J)=T$.
\end{defi}

\begin{proposition}[\cite{Edmonds1973}] \label{Tjoin}
 Let $G$ be a multigraph and $T \subseteq V(G)$. There exists a $T$-join in $G$ if and only if every component of $G$ contains an even number of vertices of $T$.
\end{proposition}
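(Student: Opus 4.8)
The plan is to use the standard ``symmetric difference of paths'' argument, exactly analogous to the odd-factor construction recalled after Proposition~\ref{oddfactor}. For the forward direction, suppose $J$ is a $T$-join and let $C$ be a component of $G$. Since $J$ is a subgraph of $G$, no edge of $J$ leaves $C$, so $\deg_J(v)=\deg_{J\cap C}(v)$ for every $v\in C$; hence the set of odd-degree vertices of the multigraph $J\cap C$ is exactly $T\cap C$. As every finite multigraph has an even number of odd-degree vertices, $|T\cap C|$ is even.

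For the converse, assume every component of $G$ meets $T$ in an even number of vertices. A $T$-join of $G$ is the union of the $(T\cap C)$-joins over the components $C$ of $G$, so I may assume $G$ is connected and write $T=\{t_1,\dots,t_{2k}\}$ with $2k=|T|$. Using connectivity, choose for each $i\in\{1,\dots,k\}$ a (simple) path $P_i$ in $G$ from $t_{2i-1}$ to $t_{2i}$, and let $J$ be the subgraph whose edge set is the symmetric difference $E(P_1)\triangle\cdots\triangle E(P_k)$ (in a multigraph each parallel edge counts as a distinct element, so this is well defined). Then $\deg_J(v)\equiv\sum_{i=1}^{k}\deg_{P_i}(v)\pmod 2$ for every $v$. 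The vertex $t_j$ is an endpoint of exactly one $P_i$, so $\deg_{P_i}(t_j)$ is odd for that $i$, while $\deg_{P_{i'}}(t_j)\in\{0,2\}$ for every other $i'$; hence $\deg_J(t_j)$ is odd. Any $v\notin T$ is interior to, or absent from, every $P_i$, so each $\deg_{P_i}(v)$ is even and $\deg_J(v)$ is even. Therefore $OddV(J)=T$, as required.

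I do not anticipate a genuine obstacle; the only points needing a little care are the reduction to the connected case, phrasing the symmetric difference over edge sets so that it is meaningful in a multigraph, and checking that the chosen pairing puts each $t_j$ on exactly one path. If one prefers to avoid fixing a pairing, an induction on $|T|$ works equally well: when $T\neq\emptyset$, pick $s,t\in T$ lying in a common component, take any $s$--$t$ path $P$, apply the inductive hypothesis to $T\triangle OddV(P)=T\setminus\{s,t\}$ (which still meets every component in an even number of vertices) to get a $(T\setminus\{s,t\})$-join $J'$, and put $E(J)=E(J')\triangle E(P)$; a one-line parity check gives $OddV(J)=T$.
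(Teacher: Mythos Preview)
Your proof is correct. Note, however, that the paper does not actually prove Proposition~\ref{Tjoin}; it simply cites it from \cite{Edmonds1973}. That said, your symmetric-difference-of-paths construction is precisely the same idea the paper sketches right after Proposition~\ref{oddfactor} for building an odd factor (pair up the vertices, take paths, keep the edges used an odd number of times), so your argument is fully in the spirit of what the paper presents and would fit seamlessly as the omitted proof.
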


The following theorem gives another necessary and sufficient condition for a multigraph to be decomposed into two odd subgraphs.

\begin{theorem} \label{partition-thm}
 Let $G$ be a multigraph and $\mathcal{Y}$ and $\mathcal{Z}$ denote the sets of components of $\langle EvenV(G) \rangle_G$ with odd order and even order, respectively. Then $G$ can be decomposed into two odd subgraphs if and only if there exists a partition $\mathcal{R} \cup \mathcal{B}$ of the components of $\langle OddV(G) \rangle_G$ such that
 \begin{enumerate}
  \item[(i)] $e_G(R,Y)$ and $e_G(B,Y)$ are both odd for every $Y \in \mathcal{Y}$, and
  \item[(ii)] $e_G(R,Z)$ and $e_G(B,Z)$ are both even for every $Z \in \mathcal{Z}$,
 \end{enumerate}
 where $R$ and $B$ are the sets of vertices that belong to the components in $\mathcal{R}$ and $\mathcal{B}$, respectively. 

\end{theorem}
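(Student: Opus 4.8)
The plan is to reformulate the decomposition as a red/blue colouring of $E(G)$ and analyse it vertex by vertex. The starting observation is that a partition $E(G)=E_{\mathrm{red}}\cup E_{\mathrm{blue}}$ yields a decomposition into two odd subgraphs if and only if: (a) every vertex $v$ with $\deg_G(v)$ odd is \emph{monochromatic}, i.e.\ all edges at $v$ have the same colour; and (b) every vertex $v$ with $\deg_G(v)$ even and positive has $\deg_{\mathrm{red}}(v)$ odd, equivalently $\deg_{\mathrm{blue}}(v)$ odd, since the two sum to $\deg_G(v)$. Indeed, at an odd-degree vertex exactly one of $\deg_{\mathrm{red}}(v),\deg_{\mathrm{blue}}(v)$ is odd, so both subgraphs can be odd only if the even colour-degree is $0$; at a vertex of even degree the two colour-degrees have equal parity, and if that parity is even the vertex must be isolated in each colour. (We may assume $G$ has no isolated vertices, which is where the stated condition is cleanest: an isolated vertex is a component of $\langle EvenV(G)\rangle_G$ of odd order $1$ with $e_G(R,\cdot)=0$, so (i) could never be met.)

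Condition (a) forces each component $X$ of $\langle OddV(G)\rangle_G$ to be monochromatic: a red vertex of $X$ makes every neighbour of it inside $X$ red, so by connectivity all of $X$ — and with it every edge joining $X$ to $EvenV(G)$ — receives one colour. Since $G$ has no edge between two distinct components of $\langle OddV(G)\rangle_G$, a valid colouring therefore determines a partition $\mathcal{R}\cup\mathcal{B}$ of these components, and this partition already fixes the colour of every edge incident with $OddV(G)$. The only remaining freedom is the colouring of $H:=\langle EvenV(G)\rangle_G$. Writing $r(v)$ for the number of edges from $v\in V(H)$ to $R$, and $\deg^{H}_{\mathrm{red}}(v)$ for the number of red edges of $H$ at $v$, we have $\deg_{\mathrm{red}}(v)=\deg^{H}_{\mathrm{red}}(v)+r(v)$, so condition (b) becomes: the red edges of $H$ form a subgraph $J$ with $OddV(J)=T$, where $T:=\{v\in V(H): r(v)\text{ is even}\}$; that is, a $T$-join of $H$.

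Now Proposition~\ref{Tjoin} says such a $T$-join exists if and only if every component $C$ of $H$ meets $T$ in an even number of vertices. Since $|T\cap C|\equiv |C|+\sum_{v\in C}r(v)=|C|+e_G(C,R)\pmod 2$, this is exactly $e_G(C,R)\equiv |C|\pmod 2$, i.e.\ $e_G(R,Y)$ odd for every $Y\in\mathcal{Y}$ and $e_G(R,Z)$ even for every $Z\in\mathcal{Z}$, which is conditions (i) and (ii) for $R$. Finally, for a component $C$ of $H$ one has $e_G(C,R)+e_G(C,B)=\sum_{v\in C}\bigl(\deg_G(v)-\deg_H(v)\bigr)$, which is even because each $\deg_G(v)$ is even and $\sum_{v\in C}\deg_H(v)$ is twice the number of edges of $C$; hence $e_G(C,R)\equiv e_G(C,B)\pmod 2$ and the conditions for $R$ and for $B$ are equivalent, which upgrades ``(i),(ii) for $R$'' to the symmetric statement in the theorem. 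Assembling: $G$ decomposes into two odd subgraphs $\iff$ for some partition $\mathcal{R}\cup\mathcal{B}$ the associated $T$-join of $H$ exists $\iff$ some partition satisfies (i) and (ii). For the sufficiency direction one simply takes such a partition, colours all edges incident with $OddV(G)$ as dictated, colours a $T$-join of $H$ (which exists by Proposition~\ref{Tjoin}) red and the rest of $H$ blue, and checks that (a) and (b) hold by construction.

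The only delicate point is the parity bookkeeping in the reduction to a $T$-join — identifying the set $T$ exactly, not merely up to parity on each component, and verifying that $|T\cap C|$ agrees with $e_G(C,R)+|C|$ modulo $2$; once the colouring-to-$T$-join dictionary is in place the rest is a matter of unwinding Proposition~\ref{Tjoin}. The other thing to watch is the isolated-vertex corner case, which is why the clean equivalence as stated presupposes that $G$ has none.
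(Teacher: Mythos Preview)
Your proof is correct and follows essentially the same approach as the paper: observe that each component of $\langle OddV(G)\rangle_G$ must be monochromatic, then reduce the colouring of $H=\langle EvenV(G)\rangle_G$ to the existence of a $T$-join via Proposition~\ref{Tjoin}, with the same parity computation $|T\cap C|\equiv |C|+e_G(C,R)\pmod 2$. Your write-up is a bit more unified (a single iff chain rather than two separate directions, plus the explicit check that $e_G(C,R)\equiv e_G(C,B)\pmod2$), and you correctly flag the isolated-vertex edge case that the paper's statement and proof tacitly assume away.
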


\begin{figure}[H]
\begin{center}
\begin{tikzpicture}[scale=1.5]
\tikzstyle{vertex}=[draw,circle,fill=black,minimum size=3,inner sep=0]

\draw (-1.75,1.5) ellipse (0.4 and 0.3);
\draw (-0.75,1.5) ellipse (0.4 and 0.3);
\draw (0.75,1.5) ellipse (0.4 and 0.3);
\draw (1.75,1.5) ellipse (0.4 and 0.3);

\node at (3.5,1.5) {$OddV(G)$};

\draw (-1.75,0) ellipse (0.4 and 0.3);
\draw (-0.75,0) ellipse (0.4 and 0.3);
\draw (0.75,0) ellipse (0.4 and 0.3);
\draw (1.75,0) ellipse (0.4 and 0.3);

\node at (-1.75,0) {$Y$};
\node at (1.75,0) {$Z$};

\node at (3.5,0) {$EvenV(G)$};

\node at (-1.25,2) {$\mathcal{R}$};
\node at (1.25,2) {$\mathcal{B}$};
\node at (-1.25,-0.5) {$\mathcal{Y}$};
\node at (1.25,-0.5) {$\mathcal{Z}$};

\draw ($(-1.75,0)+(90+39:0.45 and 0.35)$) -- ($(-1.75,1.5)+(270-45:0.45 and 0.35)$);
\draw ($(-1.75,0)+(90+32:0.45 and 0.35)$) -- ($(-1.75,1.5)+(270-35:0.45 and 0.35)$);
\draw ($(-1.75,0)+(90+25:0.45 and 0.35)$) -- ($(-1.75,1.5)+(270-25:0.45 and 0.35)$);
\draw ($(-1.75,0)+(90+20:0.45 and 0.35)$) -- ($(-0.75,1.5)+(225-5:0.45 and 0.35)$);
\draw ($(-1.75,0)+(90+10:0.45 and 0.35)$) -- ($(-0.75,1.5)+(225+5:0.45 and 0.35)$);
\draw ($(-1.75,0)+(90-0:0.45 and 0.35)$) -- ($(-0.75,1.5)+(225+15:0.45 and 0.35)$);
\node[fill=white] at (-1.85,0.65) {odd};
\draw ($(-1.75,0)+(90-5:0.45 and 0.35)$) -- ($(0.75,1.5)+(225-5:0.45 and 0.35)$);
\draw ($(-1.75,0)+(90-15:0.45 and 0.35)$) -- ($(0.75,1.5)+(225+5:0.45 and 0.35)$);
\draw ($(-1.75,0)+(90-25:0.45 and 0.35)$) -- ($(0.75,1.5)+(225+15:0.45 and 0.35)$);
\draw ($(-1.75,0)+(90-35:0.45 and 0.35)$) -- ($(1.75,1.5)+(225-10:0.45 and 0.35)$);
\draw ($(-1.75,0)+(90-45:0.45 and 0.35)$) -- ($(1.75,1.5)+(225:0.45 and 0.35)$);
\draw ($(-1.75,0)+(90-55:0.45 and 0.35)$) -- ($(1.75,1.5)+(225+10:0.45 and 0.35)$);
\node[fill=white] at (-0.85,0.65) {odd};

\draw ($(1.75,0)+(90+35:0.45 and 0.35)$) -- ($(-1.75,1.5)+(315-5:0.45 and 0.35)$);
\draw ($(1.75,0)+(90+25:0.45 and 0.35)$) -- ($(-1.75,1.5)+(315+5:0.45 and 0.35)$);
\draw ($(1.75,0)+(90+15:0.45 and 0.35)$) -- ($(-0.75,1.5)+(315-5:0.45 and 0.35)$);
\draw ($(1.75,0)+(90+5:0.45 and 0.35)$) -- ($(-0.75,1.5)+(315+5:0.45 and 0.35)$);
\node[fill=white] at (0.9,0.65) {even};
\draw ($(1.75,0)+(90-5:0.45 and 0.35)$) -- ($(0.75,1.5)+(315-5:0.45 and 0.35)$);
\draw ($(1.75,0)+(90-15:0.45 and 0.35)$) -- ($(0.75,1.5)+(315+5:0.45 and 0.35)$);
\draw ($(1.75,0)+(90-25:0.45 and 0.35)$) -- ($(1.75,1.5)+(270+25:0.45 and 0.35)$);
\draw ($(1.75,0)+(90-32:0.45 and 0.35)$) -- ($(1.75,1.5)+(270+35:0.45 and 0.35)$);
\node[fill=white] at (1.8,0.65) {even};
  
\end{tikzpicture}
\caption{The structure of the multigraph.}
\label{}
\end{center}
\end{figure}
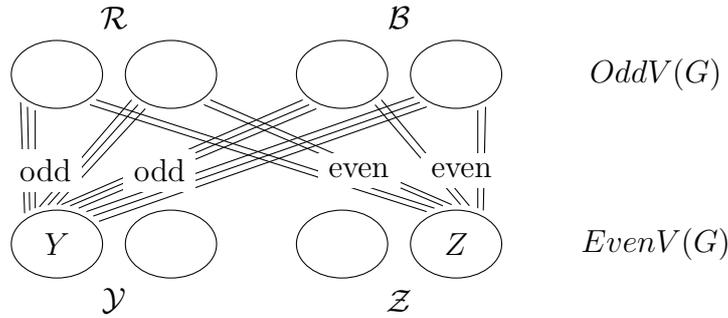

\begin{proof}
 Suppose that $G$ can be decomposed into two odd subgraphs, and color the edges of one with red and with blue the other. Obviously, if a vertex of $G$ has odd degree, then all edges incident with it must have the same color. Consider an arbitrary component $X$ of $\langle OddV(G) \rangle_G$. Then all edges that have at least one endpoint in $X$ have the same color. Let $\mathcal{R}$ and $\mathcal{B}$ denote the set of those components of $\langle OddV(G) \rangle_G$ in which the edges are red and blue, respectively. Let $Y \in \mathcal{Y}$. Then
 \[ \sum_{v \in Y} \deg_{\textrm{red}}(v) = 2 e_{\textrm{red}}(\langle Y\rangle_G) + e_G(R,Y) \text{.} \]
 Since $|Y|$ is odd and $\deg_{\textrm{red}}(v)$ is odd for every $v \in Y$, the left side of the equation is odd, and so $e_G(R,Y)$ must be odd. Similarly, $e_G(B,Y)$ is also odd, and $e_G(R,Z)$ and $e_G(B,Z)$ are both even. Therefore, the conditions (i) and (ii) hold.
 
 \medskip
 
 Next assume that there exists a partition $\mathcal{R} \cup \mathcal{B}$ satisfying (i) and (ii). Then color all the edges incident with any vertex of $R$ red and all the edges incident with any vertex of $B$ blue. Note that no edge of $\langle EvenV(G)\rangle_G$ is colored now, and there exist no edges between $R$ and $B$. Let $T \subseteq EvenV(G)$ be the set of vertices having even red-degree in this stage.
 
 Now we show that every component of $\langle EvenV(G) \rangle_G$ contains an even number of vertices of $T$. Let $Y \in \mathcal{Y}$. Then by condition (i),
 \[ \sum_{v \in Y}\deg_{\textrm{\textrm{red}}}(v) = \underbrace{\sum_{v \in Y \cap T} \underbrace{\deg_{\textrm{red}}(v)}_{\text{even}}}_{\text{even}} + \sum_{v \in Y\setminus T} \underbrace{\deg_{\textrm{red}}(v)}_{\text{odd}} = \underbrace{e_G(R,Y)}_{\text{odd}} \text{.} \]
Hence $|Y\setminus T|$ is odd, and since $|Y|$ is odd, $|Y \cap T|$ must be even. 
By the same argument given above, for any $Z \in \mathcal{Z}$, it follows that $|Z|$ is even and $e_G(R,Z)$ is even by the condition (ii), and thus $|Z\setminus T|$ is even and $|Z \cap T|$ is even. So by Proposition~\ref{Tjoin}, there exists a $T$-join in $\langle EvenV(G) \rangle_G$. Color all the edges of this $T$-join red, and all the remaining edges blue. Now the resulting red subgraph and  blue subgraph are odd subgraphs and form a partition of $E(G)$.
\end{proof}

Now we prove Theorem~\ref{n&s_condition}.

\medskip \noindent
\textit{Proof of Theorem~\ref{n&s_condition}.}
 Let $\mathcal{X}$ denote the set of components of $\langle OddV(G) \rangle_G$, and let $\mathcal{Y}$ and $\mathcal{Z}$ denote the sets of components of $\langle EvenV(G) \rangle_G$ with odd order and even order, respectively.
 
 Consider the bipartite graph $G^*$, whose vertices correspond to the elements of $\mathcal{X}$ and $\mathcal{Y} \cup \mathcal{Z}$, and an element of $\mathcal{X}$ and that of $\mathcal{Y} \cup \mathcal{Z}$ is joined by an edge if and only if there are odd number of edges of $G$ between the corresponding components. Then it is easy to see that every vertex of $Y\in \mathcal{Y}$
and $Z\in\mathcal{Z}$ has even degree in $G^*$.
 
 Our goal is to give a system of linear equations that is solvable if and only if $G$ is decomposable into two odd subgraphs and its solutions describe partitions satisfying the properties of Theorem \ref{partition-thm}. For every $X_i \in \mathcal{X}$, we assign a binary variable $x_i$ which decides whether $X_i \in \mathcal{R}$ or not. If $x_i =1$, then $X_i \in \mathcal{R}$, and if $x_i = 0$, then $X_i \in \mathcal{B}$.
 Since we want $e_G (R,Y)$ to be odd for every $Y \in \mathcal{Y}$ and $e_G (R,Z)$ to be even for every $Z \in \mathcal{Z}$, consider the following system of linear equations over the binary field $GF(2)=\{0,1\}$.
 \begin{align*}
  \sum_{X_i \in N_{G^*}(Y)} x_i & = 1 \qquad \mbox{for all}~~ Y \in \mathcal{Y} \\
  \sum_{X_i \in N_{G^*}(Z)} x_i & = 0 \qquad \mbox{for all}~~  Z \in \mathcal{Z} 
 \end{align*}

By Theorem \ref{partition-thm}, the multigraph $G$ is decomposable into two odd subgraphs if and only if this system has a solution. The system is solvable if and only if one of the
following three equivalent statements holds. 
  \begin{itemize}
   \item[(i)]  There is no collection of equations such that the sum of the left-hand sides is 0 and the sum of the right-hand sides is 1 (over the binary field).
   \item[(ii)] For any subset of the equations if the sum of the right-hand sides is 1, then there exists a variable $x_i$ which appears odd times in these equations. 
   \item[(iii)] For any $\mathcal{S} \subseteq \mathcal{Y} \cup \mathcal{Z}$ for which $|\mathcal{S} \cap \mathcal{Y}|$ is odd, there exists $X \in \mathcal{X}$ such that $|N_{G^*}(X) \cap \mathcal{S}|$ is odd.
  \end{itemize} 
	Note that statement (iii) is a graph presentation of    statement (ii).
	\hfill \qed

Since a system of linear equations over the binary field can be solved in polynomial time, 
Theorem~\ref{2odd_poly} follows.

However, it is worth translating the algorithm to the language of
graphs.  The steps of the Gauss-elimination can be followed in the
auxiliary bipartite graph $\widehat{G}^*$ which is a slight
modification of the graph $G^*$ used in the proof of
Theorem~\ref{n&s_condition}. In the following we will use ${}^*$ as an
operation that contracts components into single vertices. So the color
classes of $G^*$ are the vertex sets $\mathcal{X}^*$ and
$\mathcal{Y}^*\cup \mathcal{Z}^*$, and our goal is to partition
$\mathcal{X}^*$ into $\mathcal{R}^*$ and $\mathcal{B}^*$. To obtain
$\widehat{G}^*$ a new vertex $b$ is added to $G^*$ and it is connected
to all vertices in $\mathcal{Y}^*$. This vertex $b$ corresponds to the
constant 1 on the right side in the linear equations.

To start the Gauss-elimination we need to select a variable that has a
non-zero coefficient (i.e.~1) in at least two equations and pick one
of these equations. Therefore in $\widehat{G}^*$ we choose an edge
$x_iw$ with $|N_{\widehat{G}^*}(x_i)|\ge 2$, $x_i\in \mathcal{X}^*$
and $w \in \mathcal{Y}^* \cup \mathcal{Z}^*$.  Now in the
Gauss-elimination, we add the equation corresponding to $w$ to all the
equations corresponding to any element of $N_{\widehat{G}^*}(x_i)- \{w \}$ to make the
coefficient of $x_i$ zero in these equations. Then the resulting
system of linear equations corresponds to the bipartite graph
$\widehat{G}^*_1$ that is obtained from $\widehat{G}^*$ by replacing
the induced subgraph
$\langle N_{\widehat{G}^*}(w) \cup \left( N_{\widehat{G}^*}(x_i) - \{
  w \} \right) \rangle_{\widehat{G}^*}$
with its complement.  So $x' \in N_{\widehat{G}^*}(w)$ and
$w'\in N_{\widehat{G}^*}(x_i) - \{ w \}$ are adjacent in
$\widehat{G}^*_1$ if and only if $x'$ and $w'$ are not adjacent in
$\widehat{G}^*$. The other edges are not changed. Notice that the
degree of $x_i$ in $\widehat{G}^*_1$ will be one.

Next we repeat this procedure by choosing an other edge $x_jw'$ in
$\widehat{G}^*_1$ that satisfies the same conditions. Since the degree of
$x_i$ is already one, $x_j$ will automatically differ from the
previously chosen vertices, but we also choose $w'$ to be different
from all previously chosen vertices. If there are no more such edges
then the procedure stops.

Consider the graph of the final stage. At this point we can obtain the
desired partition of the edge set into two odd subgraphs or show the
non-existence of such a partition as follows.

\begin{itemize}
\item If a vertex $w \in\mathcal{Y}^*$ is connected only to the vertex $b$,
then the graph $G$ cannot be decomposed into two odd subgraphs, since
this means that adding up some equations results $0$ on the left-hand
side and $1$ on the right-hand side.  
\end{itemize}
So we may assume that no vertex
$w \in\mathcal{Y}^*$ is connected only to the vertex $b$.  In this
case we obtain a solution as follows.  

\begin{itemize}
\item If a vertex
$x_i \in\mathcal{X}^*$ has degree at least two, then let
$x_i\in \mathcal{B}^*$ and remove all the edges incident with $x_i$. This
means that the variable $x_i$ is a free variable, so it can be set to
0. Thus we may assume that every $x\in \mathcal{X}^*$ is adjacent to
at most one vertex of $\mathcal{Y}^*\cup \mathcal{Z}^* $. Removing these edges makes $x_i$ an isolated vertex, but note that other vertices in $\mathcal{X}^*$ cannot be isolated.

If there is a vertex in $\mathcal{Y}^* \cup \mathcal{Z}^*$ that is adjacent to $b$ and has
more than one neighbors in $\mathcal{X}^*$ (that are all leaves), then let one of these
neighbors be in $\mathcal{R}^*$  and all the others in $\mathcal{B}^*$.
This means that we set one variable to 1 and all the others to 0, so their sum is equal to  1.

\item Otherwise, if $x_i$ is in the same component as $b$, then let $x_i\in \mathcal{R}^*$, meaning that $x_i$ was set to 1 in the solution.
\item If $x_i$ is not in the component of $b$,
then let $x_i \in \mathcal{B}^*$, meaning that $x_i$ was set to 0 in the solution.

\end{itemize}
The above graph operation gives us a partition of $\mathcal{X}^*$ into
$\mathcal{R}^*\cup \mathcal{B}^*$ and the corresponding partition of
$\mathcal{X}$ satisfies the conditions in Theorem~\ref{partition-thm},
and hence $G$ is decomposed into two odd subgraphs.  \qed

\section{Acknowledgment}
The research of the first author was
supported by JSPS KAKENHI Grant Number 16K05248. The research of the second  author was
supported by National Research, Development and Innovation Office NKFIH, K-116769 and K-124171.  The research of the third  author was
supported by National Research, Development and Innovation Office NKFIH, K-124171.

\begin{bibdiv}
\begin{biblist}

\bib{Edmonds1973}{article}{
author={J.~Edmonds},
author={E.L.~Johnson},
title={Matching, Euler tours and the Chinese postman},
journal={Mathematical Programming},
year={1973},
volume={5},
number={1},
pages={88--124},
}

\bib{lovasz}{book}{
 title={Combinatorial problems and exercises},
 author={L.~Lov\'asz},
 date={2007},
 publisher={AMS Chelsea Publishing},
 address={Providence, Rhode Island},
}

\bib{luzar}{article}{
 title={Odd edge coloring of graphs},
 author={B.~Lu\v{z}ar}, author={M.~Petru\v{s}evski}, author={R.~\v{S}krekovski},
 journal={Ars Mathematica Contemporanea},
 volume={9},
 pages={277--287},
 date={2015},
}

\bib{matrai}{article}{
 title={Covering the edges of a graph by three odd subgraphs},
 author={T.~M\'atrai},
 journal={Journal of Graph Theory},
 volume={53},
 pages={75--82},
 date={2006},
}

\bib{petrusevski}{article}{
 title={Odd 4-edge-colorability of graphs},
 author={M.~Petru\v{s}evski},
 journal={Journal of Graph Theory},
 date={2017},
doi = {10.1002/jgt.22168},
}

\bib{pyber}{article}{
 title={Covering the edges of a graph by...},
 author={L.~Pyber},
 journal={Sets, Graphs and Numbers, Colloquia Mathematica Societatis J\'{a}nos Bolyai},
 volume={60},
 pages={583--610},
 date={1991},
}

\end{biblist}
\end{bibdiv}
 
\end{document}